\documentclass[a4paper,12pt]{article}     
\usepackage{amsmath,amscd,amssymb}        
\usepackage{latexsym}                     
\usepackage[english, german]{babel}                

\usepackage{theorem}                 
\usepackage{color}

\textheight 22.3cm
\textwidth 14cm
\oddsidemargin 7mm

\newtheorem{theorem}{Theorem}
\newtheorem{corollary}{Corollary}


\theorembodyfont{\rmfamily}

\newtheorem{remark}{Remark}

\newenvironment{examples}
{\smallskip\noindent{\bf Examples\/}.}{\smallskip\par}


%
\newenvironment{proof}{\begin{ProofwCaption}{Proof}}{\end{ProofwCaption}}
\newenvironment{proof*}[1]{\begin{ProofwCaption}{{#1}}}{\end{ProofwCaption}}
\newenvironment{ProofwCaption}[1]%
  {\addvspace\theorempreskipamount \noindent{\it #1.}\rm}%
  {\qed \par \addvspace\theorempostskipamount}
\newcommand{\qedsymbol}{{\rm $\Box$}}
\newcommand{\qed}{\hfill\qedsymbol}


\newcommand{\ZZ}{{\mathbb Z}}

\newcommand{\calI}{{\cal I}}

\newcommand{\cc}{\underline{c}}

\newcommand{\kk}{\underline{k}}
\newcommand{\llll}{\underline{\ell}}
\newcommand{\ttt}{\underline{t}}
\newcommand{\mm}{\underline{m}}

\newcommand{\Var}{{\rm{Var}_{\mathbb{C}}}}
\newcommand{\Conf}{{\rm Conf}}
\newcommand{\UConf}{{\rm UConf}}

\title{Generating series of classes of exotic un-ordered configuration spaces}
\author{Sabir M.~Gusein-Zade
\thanks{The work 
was supported by the grant 21-11-00080 of the Russian Science Foundation.
Keywords: configuration spaces, generating series,
Grothendieck ring of complex quasiprojective varieties.
Math. Subject Classification: 55R80, 18F30, 32S35.
}
}
\date{}

\begin{document}
\selectlanguage{english}

\maketitle

\begin{abstract}
A notion of exotic (ordered) configuration spaces
of points on a space $X$
was suggested by Yu.~Baryshnikov. He gave equations 
for the (exponential) generating series of the Euler characteristics of these spaces. Here we consider un-ordered analogues of these spaces. For $X$ being a complex quasiprojective variety, we give
equations for the generating series of classes of these configuration spaces in the Grothendieck ring
$K_0(\Var)$ of complex quasiprojective varieties. The answer is formulated in terms of the (natural) power structure over the ring $K_0(\Var)$. This
gives equations for the generating series of additive invariants of the configuration spaces such as the Hodge--Deligne polynomial and the Euler characteristic.
\end{abstract}

\section{Introduction}\label{sec:intro}
The configuration space of sequences of $k$
(different) points of a topological space $X$
(or the $k$th (ordered) configuration space of $X$) is
$$
\Conf(X,k)=\{(x_1,\ldots,x_k)\in X^k: x_i\ne x_j \mbox{ for } i\ne j\}\,,
$$
The configuration space of $k$-point subsets of a (topological) space $X$ (or the $k$th unordered configuration space of $X$) is 
$$
\UConf(X,k)=\{(x_1,\ldots,x_k)\in X^k: x_i\ne x_j \mbox{ for } i\ne j\}/S_k\,,
$$
where $S_k$ is the group of permutations on $k$ elements.
One has the well-known Macdonald type equation:
\begin{equation}\label{eqn:MacDonald_chi_simple_conf}
 1+\sum_{k=1}^{\infty} \chi(\Conf(X,k))\cdot t^k=(1+t)^{\chi(X)}\,.
\end{equation}
Here $\chi(\cdot)$ is the {\bf additive} Euler characteristic defined as the alternating sum of the ranks of the cohomology groups with compact support.
This Euler characteristic is the universal additive topological invariant (on some classes of topological spaces, say, on locally closed unions of cells in finite CW-complexes).
If one considers spaces with additional structure, one may have other addititve invariants: generalized Euler characteristics. For example, on the set of
complex quasiprojective varieties
the universal adiitive invariant is the class of a variety in the Grothendieck ring $K_0(\Var)$ of complex quasi-projective varieties. Other additive invariants are reductions of this universal one under (well-defined) group homomorphisms from $K_0(\Var)$ to the groups of values of the invariants. If an additive invariant is at the same time multiplicative, the corresponding homomorphism is a ring one. An invariant of this sort (additive and multiplicative) is the Hodge--Deligne polynomial of a complex quasiprojective variety. Thus it is obtained via a ring homomorphism from the
the Grothendieck ring $K_0(\Var)$ to the ring
$\ZZ[u,v]$ of polynomials in two variables.
An equation for the classes of complex quasiprojective varieties gives (through this homomorphism) an equation for their Hodge--Deligne polynomials.

A generalization of the notion of the configuration space $\Conf(X,k)$ called exotic
configuration spaces (see below) was suggested
by Yu.~Baryshnikov in~\cite{Baryshnikov}. Here we consider a natural version of this notion to the case of un-ordered subsets of points, i.e. a generalization of the notion of the unordered configuration space $\UConf(X,k)$.
We give Macdonald type equations for their Euler characteristics and (for the case when $X$ is a complex quasiprojective variety) their generalized versions for the generating series of classes of these configuration spaces in the Grothendieck ring $K_0(\Var)$. This gives a similar equatoion for the generating series of the Hodge--Deligne polynomials of these configuration spaces.

\section{The power structure over the Grothendieck
ring $K_0(\Var)$ and the Hodge--Deligne polynomial}\label{sec:prelim}
The notion of the power structure over a ring
was introduced in~\cite{GLM}. A power structure over a ring $R$ is a method to give sense to an expression of the form
$(1+a_1t+a_2t^2+\ldots)^m$ with $a_i$ and $m$ from $R$ as an element of
$1+b_1t+b_2t^2+\ldots\in 1+tR[[t]]$
satisfying some natural properties.
In~\cite{GLM} there was constructed a natural power structure over the Grothendieck ring $K_0(\Var)$ of complex quasiprojective varieties, i.e., for $a_i$ and $m$ being the classes $[A_i]$ and $[M]$ of complex quasiprojective varieties the coefficients $b_i$ were described as classes of quasiprojective varieties (or rather as sums of them) as well:
see Equation~(\ref{power-Grothendieck}) below

A power structure over a ring $R$ 
(defined as a map 
$(1+tR[[t]])\times R\to 1+tR[[t]]$)
permits to give sense to a similar expression
with the series depending on several variables $t_1$, \dots, $t_r$: see~\cite{GLM2}.

The ring $\ZZ$ of integers carries a natural power structure defined by the usual (say, in calculus) exponent of a series.

A complex quasiprojective variety is the difference of two complex
projective varieties (a projective variety minus a projective variety). The Grothendieck ring $K_0(\Var)$ of complex quasiprojective varieties is the abelian
group generated by the isomorphism classes $[X]$ of quasiprojective varieties modulo the relation
$[X]=[Y]-[X\setminus Y]$ for a Zariski closed subset $Y\subset X$. (The multiplication in $K_0(\Var)$ is defined by the Cartesian product.)
The (natural) power structure over the Grothendieck ring $K_0(\Var)$
is defined by the equation (see~\cite{GLM})

 \begin{eqnarray}\label{power-Grothendieck}
  \hspace{-20pt}&\ & (1+[A_1]t+[A_2]t^2+\ldots)^{[M]}=\nonumber\\
  \hspace{-20pt}&=&1+\sum_{k=1}^{\infty}\left(
  \sum_{\{k_i\}:\sum_i ik_i=k}
  \left[
  \left.
  \left(
  \left(
  M^{\sum_i k_i}\setminus \Delta
  \right)
  \times\prod_i A_i^{k_i}
  \right)
  \right/
  \prod_i S_{k_i}
  \right]
  \right)
  \cdot t^k\,,\label{power-K_0}
 \end{eqnarray}
where $A_i$, $i=1, 2, \ldots$, and $M$ are complex quasi-projective varieties, 
$[A_i]$ and $[M]$ are their classes in $K_0(\Var)$, $\Delta$ is the ``large diagonal'' in $M^{\sum_i k_i}$,
that is the set of (ordered) collections of $\sum_i k_i$ points from $M$ with at least
two coinciding ones, the group $S_{k_i}$ of permutations on $k_i$ elements acts by
simultaneous permutations on the components of the corresponding factor $M^{k_i}$ in
$M^{\sum_i k_i}=\prod_i M^{k_i}$ and on the components of the factor $A_i^{k_i}$.

An additive invariant of a complex quasiprojective variety $X$ is the Hodge--Deligne polynomial $e_X(u,v)\in \ZZ[u,v]$.
It is (completely) defined by the following two properties:
\begin{enumerate}
 \item[1)] for a smooth projective manifold $X$
 $$
 e_X(u,v)=\sum_{p,q} (-1)^{p+q}h^{p,q}(X)u^pv^q\,,
 $$
 where $h^{p,q}(X)$ are Hodge numbers of the manifold $X$;
 \item[2)] if $Y$ is a Zariski closed subvariety of $X$, one has $e_X(u,v)=e_Y(u,v)+e_{X\setminus Y}(u,v)$.
\end{enumerate}
The existence of the Hodge--Deligne polynomial was proved in~\cite{Deligne}.

A power structure over the ring $\ZZ[u,v]$ of polynomials in two variables was defined in~\cite{GLM-Mich} through a pre-$\lambda$-structure on this ring. For the case when all the coefficients $a_i$ are integers (that is polynomials do not depanding on $u$ and $v$; this is just the case in Section~\ref{sec:main}), the series $(1+a_1\ttt+a_2\ttt^2+\ldots)^{p(u,v)}$ ($a_i\in\ZZ$, $p(u,v)=\sum p_{ij}u^iv^j$) can be described in the following way.
The series $1+a_1\ttt+a_2\ttt^2+\ldots$ has a unique representation of the form
$\prod_{\mm\in\ZZ_{\ge0}^r\setminus{\overline{0}}}(1-\ttt^{\mm})^{s_{\mm}}$ with $s_{\mm}\in\ZZ$. Then
\begin{equation}\label{eqn:power_poly}
 (1+a_1\ttt+a_2\ttt^2+\ldots)^{p(u,v)}=
\prod_{\mm\in\ZZ_{\ge0}^r\setminus{\overline{0}}}\prod_{i,j}(1-u^iv^j\ttt^{\mm})^{-s_{\mm}p_{ij}}\,.
\end{equation}

The Hodge--Deligne polynomial determines a ring homomorphism $e_{\bullet}:K_0(\Var)\to\ZZ[u,v]$.
Moreover, $e_{\bullet}$ is a power structure homomorphism, that is
$$
e_{\bullet}\left((1+[A_1]t+[A_2]t^2+\cdots)^{[M]} \right)=\left(e_{\bullet}(1+[A_1]t+[A_2]t^2+\cdots)\right)^{e_M(u,v)}\,.
$$
(The homomorphism $e_{\bullet}$ acts on a series applying to its coefficients.)
This is a reformulation of a result from~\cite{Cheah}.

The Euler characteristic defines a power structure homomorphism $\chi:K_0(\Var)\to\ZZ$.

\section{Exotic configuration spaces}\label{sec:exotic_conf}
The notion of exotic (ordered) configuration spaces was introduced in~\cite{Baryshnikov}.

Assume that we consider $r$ different colors
numbered from $1$ to $r$.
Elements of $\ZZ_{\ge0}^r$ are called color counts. For an element $a=(c_1,\ldots, c_r)\in\ZZ_{\ge0}^r$, let $c_i(a):=c_i$. Let $\calI$ be a subset of $\ZZ_{\ge0}^r$
containing $\overline{0}$. (In~\cite{Baryshnikov} one considers only the case when $\calI$ is a so-called ideal in $\ZZ_{\ge0}^r$. In the discussion below this restriction will not be used.)
An $\calI$-permitted collection of colored points ($\calI$-collection for short) in $X$ is a finite subset $K$ of $X$ with a map $\psi:K\to \calI$. The (multi-)degree of a collection $(K,\psi)$ is
$\sum_{x\in K}\psi(x)$.
Let $\UConf_{\calI}(X,\kk)$ be the configuration space of $\calI$-collections in $X$ of multi-degree $\kk$. (If $\kk=\overline{0}$, $\UConf_{\calI}(X,\kk)$ is a one-point space.) If $X$ is a topological space, one has a natural topological structure on the parts of
$\UConf_{\calI}(X,\kk)$ corresponding to fixed collections $\{\psi(x)\vert x\in K\}$.
There are somewhat different possibilities
to define the topology on the union of this parts, i.e., on $\UConf_{\calI}(X,\kk)$,
however, the the Euler characteristic (being additive) does not depend on the choice of this topology. If $X$ is a complex quasiprojective variety, the configuration space 
$\UConf_{\calI}(X,\kk)$ in a natural way is the union of complex quasiprojective varieties.

\begin{examples}
 {\bf 1.} If $r=1$ and $\calI=\{0,1\}$, $\UConf_{\calI}(X,k)$ is the (unordered) configuration space $\UConf(X,k)$.
 \\
 {\bf 2.} If $r=1$ and $\calI=\ZZ_{\ge0}$, $\UConf_{\calI}(X,k)$ is the symmetric product $S^kX=X^k/S^k$.
 \\
 {\bf 3.} If $r=1$ and $\calI=\{k: 0\le k<m$,
 $\UConf_{\calI}(X,k)$ is the unordered no-$m$-equal configuration space in the sense of~\cite{Baryshnikov}. (For $m=2$ it coincides with the usual configuration space $\UConf(X,k)$ from Example~1.)
 \\
 {\bf 4.} If, for an arbitrary $r$, $\calI$ is the union of the coordinate axes that is consists of all
 points of $\ZZ^r_{\ge0}$ with not more than one coordinate different from zero (the “apartheid” ideal in the terms of~\cite{Baryshnikov}), the configuration space $\UConf_{\calI}(X,k)$ parametrizes collections of colored points where it is forbidden points
of different colors to collide, but the collision of any number of points of the same color is allowed.
\\
 {\bf 5.} If $\calI=\{\cc\in\ZZ^r_{\ge0}:c_1\le c_2\le\ldots\le c_r\}$, one can speak about the configuration space of nested subsets (a sort of a reduction of nested Hilbert schemes from~\cite{Cheah2}). 
\end{examples}

\section{Generating series of classes of configuration spaces and of their invariants}\label{sec:main}
Now we give equations for the generating series of the classes in $K_0(\Var)$ of the configuration
spaces $\UConf_{\calI}(X,\kk)$ (in the case when
$X$ is a complex quasiprojective variety) and of their invariants. For a subset
$\calI\subset \ZZ_{\ge 0}^r$, $\overline{0}\in\calI$, let 
$$
C_{\calI}(\ttt):=\sum_{\cc\in\calI}\ttt^{\cc}\in\ZZ[[t_1,\ldots, t_r]]\subset K_0(\Var)[[t_1,\ldots, t_r]]\,.
$$

\begin{theorem}\label{theo:main}
 For a quasiprojective variety $X$ one has
\begin{equation}
 \sum_{\kk\in\ZZ_{\ge0}^r}
 [\UConf_{\calI}(X,\kk)]\cdot\ttt^{\kk}=\left(C_{\calI}(\ttt)\right)^{[X]}\,.
\end{equation}
\end{theorem}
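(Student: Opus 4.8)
The plan is to expand the right-hand side with the power structure formula~(\ref{power-K_0}) in its several-variable form (as in~\cite{GLM2}) and to match the resulting coefficients with the classes $[\UConf_{\calI}(X,\kk)]$ piece by piece. First I would rewrite
$C_{\calI}(\ttt)=1+\sum_{\cc\in\calI\setminus\{\overline{0}\}}\ttt^{\cc}$,
observing that every nonconstant coefficient is the class $[\mathrm{pt}]$ of a one-point variety. Consequently, in the power structure expansion the ``coefficient varieties'' $A_{\cc}$ are all points, so the factors $\prod_{\cc}A_{\cc}^{k_{\cc}}$ contribute trivially and drop out of the class.

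Next I would write out the several-variable analogue of~(\ref{power-K_0}): grouping the exponents by the multidegree $\kk$ rather than by a single total degree, the coefficient of $\ttt^{\kk}$ in $\left(C_{\calI}(\ttt)\right)^{[X]}$ becomes
$$
\sum_{\{k_{\cc}\}:\,\sum_{\cc}k_{\cc}\cc=\kk}\left[\left(X^{N}\setminus\Delta\right)\big/\prod_{\cc}S_{k_{\cc}}\right],\qquad N:=\sum_{\cc}k_{\cc},
$$
the sum being over all families $\{k_{\cc}\}$ of nonnegative integers indexed by $\cc\in\calI\setminus\{\overline{0}\}$ with the indicated multidegree; here $S_{k_{\cc}}$ permutes the $k_{\cc}$ copies of $X$ carrying the label $\cc$ and $\Delta\subset X^{N}$ is the large diagonal.

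The geometric heart of the argument is to identify each summand with a stratum of $\UConf_{\calI}(X,\kk)$. A point of $\left(X^{N}\setminus\Delta\right)/\prod_{\cc}S_{k_{\cc}}$ is an unordered collection of $N$ pairwise distinct points of $X$ together with a labelling that assigns the color count $\cc$ to exactly $k_{\cc}$ of them; since $\overline{0}$ never occurs among the $A_{\cc}$, this is precisely a finite subset $K\subset X$ with a map $\psi\colon K\to\calI\setminus\{\overline{0}\}$ whose label multiplicities are $\{k_{\cc}\}$ and whose multidegree $\sum_{x\in K}\psi(x)$ equals $\kk$. Taking the disjoint union over all admissible families $\{k_{\cc}\}$ therefore recovers exactly $\UConf_{\calI}(X,\kk)$; points labelled $\overline{0}$ carry no color and may be discarded, which is why the restriction to $\calI\setminus\{\overline{0}\}$ is harmless. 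I would then conclude that $[\UConf_{\calI}(X,\kk)]$ equals the displayed sum, whence the two generating series coincide.

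The step I expect to be the main obstacle is this last identification at the level of classes in $K_0(\Var)$ rather than as a mere set-theoretic bijection: the variety structure on $\UConf_{\calI}(X,\kk)$ depends on a choice of topology/gluing of its strata (as noted in Section~\ref{sec:exotic_conf}), so I must argue that the decomposition according to the label profile $\{k_{\cc}\}$ is a decomposition into locally closed pieces whose classes add up additively, independently of that choice. This is where I would use that the $\{k_{\cc}\}$-strata are genuinely disjoint (the points of $K$ are always distinct, so no collisions occur inside $\UConf_{\calI}(X,\kk)$) and that the class in $K_0(\Var)$ is additive over such a stratification, so that only the individual pieces $\left(X^{N}\setminus\Delta\right)/\prod_{\cc}S_{k_{\cc}}$ enter the final count.
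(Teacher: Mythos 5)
Your proposal is correct and takes essentially the same route as the paper: the paper's one-paragraph proof invokes Gorsky's ``particles with internal states'' interpretation of the coefficients of the power structure, which is exactly what you verify by hand when you expand $\left(C_{\calI}(\ttt)\right)^{[X]}$ via the several-variable form of~(\ref{power-K_0}) and identify each term $\left[\left(X^{N}\setminus\Delta\right)/\prod_{\cc}S_{k_{\cc}}\right]$ with the stratum of $\UConf_{\calI}(X,\kk)$ having label profile $\{k_{\cc}\}$. Your final points --- that labels $\overline{0}$ are discarded and that the class of $\UConf_{\calI}(X,\kk)$ in $K_0(\Var)$ is the sum of the classes of its label-profile strata, independently of the choice of topology gluing them --- are precisely the details the paper leaves implicit in its ``more or less tautologically''.
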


\begin{proof}
 The statement follows directly (more or less tautologically) from the following version
 of the description of the coefficients at the monomials $\ttt^{\kk}$ in
 \begin{equation}\label{eqn:power}
 \left(1+\sum_{\kk\in\ZZ_{\ge0}^r\setminus{\overline{0}}}[A_{\kk}]\cdot\ttt^{\kk}\right)^{[X]}
 \end{equation}
 ($A_{\kk}$ and $X$ are complex quasiprojective varieties)
 formulated in the classical case ($r=1$) by E.~Gorsky in~\cite{Gorsky}. Assume that,
 on the variety $X$, there live particles
 equiped with $r$ non-negative integer numbers
 (say, of different sorts of charges; colors in our case). A particle with charges $k_1$, \dots, $k_r$ has a space of internal states
 parametrized by points of a quasiprojective variety $A_{\kk}$ ($\kk=(k_1,\ldots, k_r)$). Then the coefficient at
 $\ttt^{\kk}$ in~(\ref{eqn:power}) is the configuration space of tuples of particles on $X$ with the total charges $k_1$, \dots, $k_r$.
 (In our case each of the varieties $A_{\kk}$ is either empty or consists of one point.)
 \end{proof}

 \begin{corollary}\label{cor:Hodge-Deligne}
 One has
 $$
  \sum_{\kk\in\ZZ_{\ge0}^r}
 e_{\UConf_{\calI}(X,\kk)}(u,v)\cdot\ttt^{\kk}=\left(C_{\calI}(\ttt)\right)^{e_X(u,v)}
 $$
 in the sense of the power structure over the polynomial ring
 $\ZZ[u,v]$ given by (\ref{eqn:power_poly}).
 \end{corollary}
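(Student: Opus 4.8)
The plan is to deduce this directly from Theorem~\ref{theo:main} by applying the ring homomorphism $e_{\bullet}:K_0(\Var)\to\ZZ[u,v]$ to the identity proved there, exploiting the fact recalled in Section~\ref{sec:prelim} that $e_{\bullet}$ is a homomorphism of power structures. So the corollary should be essentially a formal consequence, with one compatibility point to pin down.

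First I would apply $e_{\bullet}$ coefficientwise to the left-hand side of the equation in Theorem~\ref{theo:main}. Since $e_{\bullet}$ sends each class $[\UConf_{\calI}(X,\kk)]$ to $e_{\UConf_{\calI}(X,\kk)}(u,v)$, the series $\sum_{\kk}[\UConf_{\calI}(X,\kk)]\,\ttt^{\kk}$ is carried to $\sum_{\kk}e_{\UConf_{\calI}(X,\kk)}(u,v)\,\ttt^{\kk}$, which is exactly the left-hand side of the asserted identity. Next I would treat the right-hand side $(C_{\calI}(\ttt))^{[X]}$ using the power-structure-homomorphism property, giving $e_{\bullet}\bigl((C_{\calI}(\ttt))^{[X]}\bigr)=\bigl(e_{\bullet}(C_{\calI}(\ttt))\bigr)^{e_X(u,v)}$. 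A small point here is that the property was stated in Section~\ref{sec:prelim} for a single variable, so I would note it extends verbatim to series in $t_1,\dots,t_r$, the multivariable power structure of~\cite{GLM2} being built from the one-variable one in a way respected by $e_{\bullet}$. Since $C_{\calI}(\ttt)=\sum_{\cc\in\calI}\ttt^{\cc}$ has all coefficients equal to $0$ or $1$, hence integer, and $e_{\bullet}$ is the identity on $\ZZ\subset K_0(\Var)$ (the class of a point maps to $1$), one has $e_{\bullet}(C_{\calI}(\ttt))=C_{\calI}(\ttt)$ as a series with constant integer coefficients in $\ZZ[u,v][[t_1,\dots,t_r]]$.

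Finally I would identify the resulting operation $(C_{\calI}(\ttt))^{e_X(u,v)}$ with the one described by~(\ref{eqn:power_poly}): for an integer-coefficient base series raised to a polynomial power $p(u,v)$, the power structure over $\ZZ[u,v]$ of~\cite{GLM-Mich} is given precisely by~(\ref{eqn:power_poly}). Assembling these steps, the two sides of the corollary coincide. The only step I expect to carry genuine content beyond bookkeeping is this last compatibility: that the power structure over $\ZZ[u,v]$ induced by $e_{\bullet}$ from the natural one over $K_0(\Var)$ agrees, on integer-coefficient series, with the pre-$\lambda$-structure formula~(\ref{eqn:power_poly}). Granting this — which is exactly what the cited results of~\cite{Cheah} and~\cite{GLM-Mich} supply — everything matches and the corollary follows.
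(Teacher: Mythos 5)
Your proposal is correct and is precisely the argument the paper intends: the corollary is stated without a separate proof because it follows from Theorem~\ref{theo:main} by applying $e_{\bullet}$, using exactly the facts assembled in Section~\ref{sec:prelim} (that $e_{\bullet}$ is a power structure homomorphism, and that for integer-coefficient series the induced power over $\ZZ[u,v]$ is given by~(\ref{eqn:power_poly})). Your extra care about the multivariable extension and about $e_{\bullet}$ fixing the integer series $C_{\calI}(\ttt)$ is sound bookkeeping, not a deviation.
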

 
 \begin{remark}
 In Examples~1-3 in Section~\ref{sec:exotic_conf} the series 
 $C_{\calI}(t)$ has the decomposition
 $(1-t)^{-1}(1-t^2)$, $(1-t)^{-1}$, and $(1-t)^{-1}(1-t^k)$ respectively. In Example~4 with
 $r=2$ one has 
 $C_{\calI}(t)=(1-t_1)^{-1}(1-t_2)^{-1}(1-t_1t_2)$.
 \end{remark}

 \begin{corollary}\label{cor:Euler}
 One has
 \begin{equation}\label{eqn:Euler_complex}
  \sum_{\kk\in\ZZ_{\ge0}^r}
 \chi(\UConf_{\calI}(X,\kk))\cdot\ttt^{\kk}=\left(C_{\calI}(\ttt)\right)^{\chi(X)}\,.
 \end{equation}
 \end{corollary}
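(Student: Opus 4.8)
The plan is to obtain the statement by pushing the identity of Theorem~\ref{theo:main} forward along the Euler characteristic homomorphism $\chi\colon K_0(\Var)\to\ZZ$, in exactly the way Corollary~\ref{cor:Hodge-Deligne} is obtained via the Hodge--Deligne homomorphism $e_{\bullet}$. Both sides of Theorem~\ref{theo:main} are elements of $1+(t_1,\dots,t_r)\,K_0(\Var)[[t_1,\dots,t_r]]$, and $\chi$ is stated earlier to be a power structure homomorphism, so applying $\chi$ coefficient-wise should commute with the power operation and deliver the desired equation over $\ZZ$.

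First I would apply $\chi$ to the left-hand side of Theorem~\ref{theo:main}. Since $\chi$ acts on a series by acting on its coefficients, and the coefficient of $\ttt^{\kk}$ is the class $[\UConf_{\calI}(X,\kk)]$, this produces precisely $\sum_{\kk}\chi(\UConf_{\calI}(X,\kk))\,\ttt^{\kk}$, the left-hand side of~(\ref{eqn:Euler_complex}).

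Next I would treat the right-hand side. By the power structure homomorphism property of $\chi$ one has $\chi\bigl((C_{\calI}(\ttt))^{[X]}\bigr)=\bigl(\chi(C_{\calI}(\ttt))\bigr)^{\chi([X])}=\bigl(\chi(C_{\calI}(\ttt))\bigr)^{\chi(X)}$, where the outer power on the right is taken in the power structure over $\ZZ$ given by the ordinary exponential. It then remains to identify $\chi(C_{\calI}(\ttt))$. Each coefficient of $C_{\calI}(\ttt)=\sum_{\cc\in\calI}\ttt^{\cc}$ is either the class of a point (when $\cc\in\calI$) or $0$ (otherwise); since $\chi$ of a point equals $1$ and $\chi(0)=0$, applying $\chi$ coefficient-wise fixes the series, so $\chi(C_{\calI}(\ttt))=C_{\calI}(\ttt)$, now viewed in $1+(t_1,\dots,t_r)\,\ZZ[[t_1,\dots,t_r]]$. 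Combining the two sides yields~(\ref{eqn:Euler_complex}).

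The only point requiring care — the main, and rather mild, obstacle — is the bookkeeping of power structures: one must check that the homomorphism property of $\chi$ is invoked with the correct source power structure over $K_0(\Var)$ (Equation~(\ref{power-Grothendieck})) and target power structure over $\ZZ$ (the exponential one), and that the coefficient-wise action of $\chi$ is compatible with the inclusion $\ZZ\subset K_0(\Var)$ under which the coefficients of $C_{\calI}(\ttt)$ already live. Granting the earlier assertion that $\chi$ is a power structure homomorphism, this is immediate, and the corollary follows with no further computation.
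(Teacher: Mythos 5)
Your proposal is correct and matches the paper's intended argument exactly: Corollary~\ref{cor:Euler} is stated as an immediate consequence of Theorem~\ref{theo:main} via the fact (asserted at the end of Section~\ref{sec:prelim}) that $\chi\colon K_0(\Var)\to\ZZ$ is a power structure homomorphism, precisely parallel to how Corollary~\ref{cor:Hodge-Deligne} follows via $e_{\bullet}$. Your extra bookkeeping---that $\chi$ fixes $C_{\calI}(\ttt)$ coefficient-wise since each coefficient is the class of a point or zero---is exactly the (unstated) verification the paper relies on.
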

 
 As a consequence of Theorem~{\ref{theo:main}},
 Corollary~\ref{cor:Euler} is valid for complex quasiprojective varieties. Let us show that Equation~(\ref{eqn:Euler_complex}) holds for a wider class of spaces, namely for spaces homeomorphic to locally closed unions of cells in finite CW-complexes.
 
\begin{theorem}\label{theo:main2}
 Let $X$ be a locally closed union of cells in a CW-complex $W$.
 Then one has
 \begin{equation}\label{eqn:Euler_CW}
  \sum_{\kk\in\ZZ_{\ge0}^r}
 \chi(\UConf_{\calI}(X,\kk))\cdot\ttt^{\kk}=\left(C_{\calI}(\ttt)\right)^{\chi(X)}\,.
 \end{equation}
\end{theorem}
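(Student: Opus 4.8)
The plan is to deduce~(\ref{eqn:Euler_CW}) from the already proved algebraic case by cutting $X$ into cells, using that both sides are multiplicative under such cutting. Write $Z_X(\ttt):=\sum_{\kk}\chi(\UConf_{\calI}(X,\kk))\,\ttt^{\kk}$ for the left-hand side of~(\ref{eqn:Euler_CW}). For a fixed multidegree, $\UConf_{\calI}(X,\kk)$ is the finite disjoint union, over colour profiles $\{n_{\cc}\}_{\cc\in\calI\setminus\{\overline{0}\}}$ with $\sum_{\cc}n_{\cc}\cc=\kk$, of the strata $\Conf(X,N)/\prod_{\cc}S_{n_{\cc}}$, where $N=\sum_{\cc}n_{\cc}$ and the symmetric groups permute equally coloured points. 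If $X=A\sqcup B$ with $A$ closed and $B=X\setminus A$ open, then points of $A$ are automatically distinct from points of $B$, so each such stratum splits as a disjoint union of products of the corresponding strata for $A$ and for $B$ over all ways of distributing the coloured points. Since $\chi$ is additive over finite locally closed stratifications and multiplicative on products, this yields $Z_X=Z_A\cdot Z_B$. On the right-hand side, additivity of $\chi$ gives $\chi(X)=\chi(A)+\chi(B)$, and the power structure over $\ZZ$ satisfies $(C_{\calI})^{m+m'}=(C_{\calI})^{m}(C_{\calI})^{m'}$; hence both sides of~(\ref{eqn:Euler_CW}) are multiplicative in the same sense. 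Peeling off a cell of maximal dimension (which is open in $X$, with closed complement again a locally closed union of cells), one reduces by induction to a single open cell $e\cong\RR^n$: it suffices to prove $Z_{\RR^n}(\ttt)=(C_{\calI}(\ttt))^{(-1)^n}$, since summing exponents recovers $\chi(X)=\sum_i(-1)^{\dim e_i}$.

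For an even-dimensional cell there is nothing new: $\RR^{2m}\cong\CC^m$ is a complex quasiprojective variety, its coloured configuration strata are complex quasiprojective varieties whose compactly supported Euler characteristics agree with the topological ones, and Corollary~\ref{cor:Euler} applied to $\CC^m$ gives $Z_{\CC^m}(\ttt)=(C_{\calI}(\ttt))^{\chi(\CC^m)}=(C_{\calI}(\ttt))^{1}$, as needed because $(-1)^{2m}=1$.

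The substantive case is the odd-dimensional cell, where $\RR^{2m+1}$ carries no complex structure and Corollary~\ref{cor:Euler} cannot be invoked. Here I would pass to the sphere $S^{2m+1}=\RR^{2m+1}\sqcup\{\mathrm{pt}\}$, so that multiplicativity gives $Z_{S^{2m+1}}=C_{\calI}\cdot Z_{\RR^{2m+1}}$, and compute $Z_{S^{2m+1}}$ directly. Viewing $S^{2m+1}\subset\CC^{m+1}$ with its diagonal free circle action, the ordered configuration space $\Conf(S^{2m+1},N)$ is for $N\ge1$ the total space of an $S^1$-bundle, whence $\chi(\Conf(S^{2m+1},N))=\chi(S^1)\cdot\chi(\Conf(S^{2m+1},N)/S^1)=0$. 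Moreover every subgroup $\prod_{\cc}S_{n_{\cc}}\subset S_N$ acts freely on $\Conf(S^{2m+1},N)$, because a configuration of distinct points has trivial permutation stabilizer; thus each stratum is a finite quotient and $\chi\big(\Conf(S^{2m+1},N)/\prod_{\cc}S_{n_{\cc}}\big)=\chi(\Conf(S^{2m+1},N))/\big|\prod_{\cc}S_{n_{\cc}}\big|=0$. Every profile contributing to $\UConf_{\calI}(S^{2m+1},\kk)$ with $\kk\ne\overline{0}$ has $N\ge1$, so all its strata carry vanishing Euler characteristic and $Z_{S^{2m+1}}(\ttt)=1=(C_{\calI})^{\chi(S^{2m+1})}$. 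Solving the factorization gives $Z_{\RR^{2m+1}}(\ttt)=(C_{\calI}(\ttt))^{-1}=(C_{\calI}(\ttt))^{(-1)^{2m+1}}$, which completes the cell computation and hence the proof.

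The main obstacle is exactly this odd-dimensional case: even cells are free from the algebraic result, but odd cells are not complex varieties, so the identity must be produced by a genuinely topological argument rather than imported through $K_0(\Var)$. The device that makes it work is the combination of the free circle action on odd spheres with the freeness of the symmetric-group action on ordered configurations, forcing the Euler characteristic of every nonempty coloured stratum to vanish; the remaining care is bookkeeping, checking that the reduction to single cells is legitimate for an arbitrary locally closed union of cells and that all stratifications occurring are finite so that $\chi$ is additive over them.
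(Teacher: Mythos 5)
Your proof is correct, and it reaches the theorem by a genuinely different route in the one place where the two arguments could differ. The opening is the same as the paper's: both establish multiplicativity of $Z_X(\ttt)=\sum_{\kk}\chi(\UConf_{\calI}(X,\kk))\ttt^{\kk}$ under cutting $X$ into a closed union of cells and its open complement, and both reduce to proving $Z_{\sigma^d}=(C_{\calI}(\ttt))^{(-1)^d}$ for a single open cell. At that point the paper stays entirely combinatorial: it runs an induction on $d$, splitting the $d$-cell into two $d$-cells and one $(d-1)$-cell, which gives $\lambda_{\sigma^d}=(\lambda_{\sigma^d})^2\lambda_{\sigma^{d-1}}$ and hence $\lambda_{\sigma^d}=(\lambda_{\sigma^{d-1}})^{-1}$; this handles both parities uniformly, uses nothing beyond additivity of $\chi$ and invertibility of power series with constant term $1$, and in particular keeps Theorem~\ref{theo:main2} independent of Theorem~\ref{theo:main}. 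You instead split by parity: even cells are imported from Corollary~\ref{cor:Euler} via $\RR^{2m}\cong\CC^m$, and odd cells are computed by one-point compactifying to $S^{2m+1}$ and showing that every positive-degree stratum $\Conf(S^{2m+1},N)/\prod_{\cc}S_{n_{\cc}}$ has vanishing Euler characteristic, using the free diagonal circle action together with the freeness of the permutation action. Your argument is sound --- the standard facts you invoke (multiplicativity of the compactly supported Euler characteristic in fibre bundles and finite covers, hence its vanishing on a space with a free $S^1$-action) do hold for the semialgebraic spaces arising here, and your bookkeeping (finiteness of the stratification, openness of a top-dimensional cell, invertibility of $C_{\calI}$ since $\overline{0}\in\calI$ forces constant term $1$) is right. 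What the comparison shows: the paper's cell-splitting trick is shorter, more elementary, and self-contained, whereas your route is heavier (it needs Theorem~\ref{theo:main} for even cells and genuine fibration input for odd ones) but buys a geometric explanation of the sign $(-1)^d$ --- odd cells contribute inverse factors precisely because all configuration strata of an odd sphere are killed by the free circle action. It is worth noticing that the paper's splitting trick, applied in your framework, would have let you dispense with Corollary~\ref{cor:Euler} and the sphere argument altogether.
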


\begin{proof}
Let us denote the left hand side of Equation~(\ref{eqn:Euler_CW}) by $\lambda_{X}^{\calI}(\ttt)$.
 Assume that $Y$ is also a union of cells of $W$, closed in $X$. Obviously one has
 $$
 \UConf_{\calI}(X,\kk)=\bigsqcup_{\llll\le\kk}
 \UConf_{\calI}(Y,\llll)\times
 \UConf_{\calI}(X\setminus Y,\kk-\llll)
 $$
 ($\llll=(\ell_1,\ldots,\ell_r)\le\kk=(k_1,\ldots,k_r)$ iff $\ell_i\le k_i$ for all $i$).
 This implies that 
 $$
 \lambda_{X}^{\calI}(\ttt)= \lambda_{Y}^{\calI}(\ttt) \lambda_{X\setminus Y}^{\calI}(\ttt)\,.
 $$
 Since $X$ is a disjoint union of (open) cells, it is sufficient to prove Equation~(\ref{eqn:Euler_CW}) for $X$ being a cell $\sigma^d$ of dimention $d$.
 One has $\chi(\sigma^d)=(-1)^d$.
 Thus it is necessary to show that
 \begin{equation}\label{eqn:cell}
 \lambda_{\sigma^d}^{\calI}(\ttt)=\left(C_{\calI}(\ttt)\right)^{(-1)^d}.
 \end{equation}
 For $d=0$ ($\sigma^0$ being a point) Equation~(\ref{eqn:cell}) is obvious.
 Assume that it is proved for cells of dimension
 less than $d$. In an obvious way the cell $\sigma^d$ can be decomposed into two $d$-dimensional and one $(d-1)$-dimensional cells.
 Therefore
 $$
 \lambda_{\sigma^d}^{\calI}(\ttt)
 =\left(\lambda_{\sigma^d}^{\calI}(\ttt)\right)^2 \lambda_{\sigma^{d-1}}^{\calI}(\ttt)
 $$
 and thus $\lambda_{\sigma^d}^{\calI}(\ttt)
 =\left(\lambda_{\sigma^d}^{\calI}(\ttt)\right)^{-1}$, what proves the statement.
\end{proof}


\medskip
\noindent Moscow State University, Faculty of Mechanics and Mathematics,\\
Moscow, GSP-1, 119991, Russia\\
\& National Research University ``Higher School of Economics'',\\
Usacheva street 6, Moscow, 119048, Russia.\\
E-mail: sabir@mccme.ru

\end{document}